\newtheorem{theorem}{Theorem}[section]
\newtheorem{proposition}[theorem]{Proposition}
\newtheorem{corollary}[theorem]{Corollary}
\newtheorem{example}[theorem]{Example}
\newtheorem{remark}[theorem]{Remark}
\newcommand{\spb}[1]{\smallskip}
\newcommand{\mpb}[1]{\medskip}
\newcommand{\bpb}[1]{\bigskip}
\renewcommand{\a}{\alpha}
\renewcommand{\d}{\delta}
\newcommand{\D}{\Delta}
\begin{document}

\title{Computing the strong alliance polynomial of a graph}

\author[1]{Walter Carballosa\thanks{waltercarb@gmail.com}}
\affil[1]{Facultad de Matem\'aticas, Universidad Aut\'onoma de Guerrero, Carlos E. Adame No.54 Col. Garita, 39650 Acalpulco Gro., Mexico.}
%\email{waltercarb@gmail.com}

\author[1]{Juan Carlos Hern\'andez-G\'omez\thanks{carloshg@cimat.mx}}
%\affil[2]{Facultad de Matem\'aticas, Universidad Aut\'onoma de Guerrero, Carlos E. Adame No.54 Col. Garita, 39650 Acalpulco Gro., Mexico.}
%\email{carloshg@cimat.mx}

\author[2]{Omar Rosario\thanks{orosario@math.uc3m.es}}
\affil[2]{Departamento de Matem\'aticas, Universidad Carlos III de Madrid, Avenida de la Universidad 30, 28911 Legan\'es, Madrid, Spain.}
%\email{orosario@math.uc3m.es}

\author[2]{Yadira Torres-Nu\~nez\thanks{yadiratn83@gmail.com}}
%\address{Departamento de Matem\'aticas, Universidad Carlos III de Madrid, Avenida de la Universidad 30, 28911 Legan\'es, Madrid, Spain.}
%\email{yadiratn83@gmail.com}

\maketitle{}

\begin{abstract}
We introduce the strong alliance polynomial of a graph. The strong alliance polynomial of a graph $G$ with order $n$ and strong defensive alliance number $a(G)$ is the polynomial $a(G;x):=\sum_{i=a(G)}^{n}\, a_i(G)\ x^i$, where $a_{k}(G)$ is the number of strong defensive alliances with cardinality $k$ in $G$.
We obtain some properties of $a(G; x)$ and its coefficients.
In particular, we compute strong alliance polynomial for path, cycle, complete, start, complete bipartite and double star graphs; some of them verify unimodality.

{\it Keywords:} Alliances in Graphs; Strong Defensive Alliances; Polynomials of Graph; Alliance Polynomials; Unimodal Polynomial.

{\it 2010 Mathematics Subject Classification:} 05C69; 11B83.
\end{abstract}
%
%{\it Keywords:} Alliances in Graphs; Strong Defensive Alliances; Polynomials of Graph; Alliance Polynomials; Unimodal Polynomial.
%
%{\it 2010 Mathematics Subject Classification:} 05C69; 11B83.

%----------------------- Introduction ---------------------------------------
\section{Introduction.}

The study of the mathematical properties of alliances in graphs were started by P. Kristiansen, S. M. Hedetniemi and  S. T. Hedetniemi in 2004, see \cite{KHH}.
The alliances in graphs is a topic of recent and increasing interest in
graph theory; see, for instance \cite{C,FLH3,H3,RVS1,RVGS,S,SBF1,SRV1,SRV2}.
%The study of alliances as a graph-theoretic concept has recently attracted a great deal of attention due to some interesting applications in a variety of areas, including quantitative analysis of secondary RNA structures \cite{HKSZ} and national defense \cite{P}. Besides, defensive alliances are the mathematical model of web communities. %Adopting the definition of Web community proposed recently in \cite{FLG}, “a Web community is a set of web pages having more hyperlinks (in either direction) to members of the set than to non-members”.

Some parameters of a graph $G$ allow to define polynomials on the graph $G$, for instance,
the parameters associated to matching sets \cite{F,GG}, independent sets \cite{BDN,HL}, domination sets \cite{AAP,AB}, chromatic numbers \cite{R,T} and many others.
In \cite{CRST}, the authors use the exact index of alliance by define the exact alliance polynomial of a graph. In this work we choose the cardinality of the strong alliance by define the strong alliance polynomial of a graph (see Section 2).

We begin stablishing the used terminology.  Throughout
this paper, $G=(V,E)$ denotes a %(not necessarily connected)
simple graph of order $|V|=n$ and size $|E|=m$.
We denote two adjacent vertices $u$ and $v$ by
$u\sim v$. For a nonempty set $X\subseteq V$, and a vertex $v\in V$,
 $N_X(v)$ denotes the set of neighbors that $v$ has in $X$:
$N_X(v):=\{u\in X: u\sim v\},$ and the degree of $v$ in $ X$ will be
denoted by $\delta_{X}(v)=|N_{X}(v)|$.
We denote the degree of a vertex $v_i\in V$  by $\delta(v_i)=\delta_{G}(v_i)$ (or by $\delta_i$ for short),
the minimum degree of $G$ by $\d$ and the maximum degree of $G$ by $\D$.
The subgraph induced by
$S\subset V$ will be denoted by  $\langle S\rangle $ and the
complement of the set $S\subset V$ will be denoted by $\overline{S}$.

A nonempty set $S\subseteq V$ is a \emph{defensive  $k$-alliance} in
$G$, with $k\in [-\Delta,\Delta]\cap \mathbb{Z}$, if for
every $ v\in S$,
\begin{equation}\label{cond-A-Defensiva} \delta _S(v)\ge \delta_{\overline{S}}(v)+k.\end{equation}
%Notice that
%\eqref{cond-A-Defensiva} is equivalent to
%\begin{equation}\label{cond-A-Defensiva1} \delta (v)\ge 2\delta_{\bar{S}}(v)+k\end{equation}
%and
%\begin{equation}\label{cond-A-Defensiva2} 2\delta_S (v)\ge \delta(v)+k.\end{equation}

%We consider the value of $k$ in the set of integers $\mathcal{K}:= [-\D,\D]\cap \mathbb{Z}$.
%In some graphs $G$, there are some values of  $k\in \mathcal{K}$, such that do not exist defensive
%$k$-alliances in $G$. For instance, for $k\ge 2$ in the star graph $S_n$, do no exist defensive $k$-alliances.
%Besides, $V(G)$ is a defensive $\d$-alliance in $G$.
%Notice that for any $S$ there exists some $k\in\mathcal{K}$ such that it is a defensive $k$-alliance in $\G$.
%For graphs having defensive $k$-alliances, the \emph{defensive $k$-alliance
%number} of $G$, denoted by $\a_k(G)$, is defined as the minimum cardinality of a
%defensive $k$-alliance in $G$.

The concept of defensive $k$-alliance was introduced in \cite{SD} as a generalization of defensive alliance defined in \cite{KHH}.
Colloquially speaking, a defensive $k$-alliance in a graph $G$ is a set $S$ of vertices of $G$ such that every vertex in $S$ has at least $k$ more neighbors in $S$ than it has outside of $S$.
A \emph{defensive alliance} in a graph $G$ is a defensive ($-1$)-alliance in $G$, while a \emph{strong defensive alliance} is a defensive $0$-alliance.
In strong defensive alliance $S$ says that each vertex of $S$ is strongly held from a possible attack by the vertices in $\overline{S}$ to be in numerical superiority over its neighbors in $\overline{S}$.
%Throughout this report we use the term \emph{strong alliance} to refer to a strong defensive alliance.
The strong defensive alliances are also known as cohesive set, see e.g., \cite{SD}.
For graphs having strong defensive $k$-alliances, the \emph{defensive $k$-alliance
number} of $G$, denoted by $\a_k(G)$, is defined as the minimum cardinality of a defensive $k$-alliance in $G$. In particular, we denote by $a(G)$ the \emph{strong defensive alliance
number} (i.e., defensive $0$-alliance number) of a graph $G$.

Throughout this paper we just consider defensive $k$-alliances with $k = 0$, i.e., strong defensive alliances.
Also, we pay special attention to strong defensive alliances with connected induced subgraph.
Note that to study $S\subset V$ with $\langle S\rangle$ no connected, can be analyzed separately in each connected components.
Throughout this paper we consider strong defensive alliance $S$ with connected induced subgraph.

\smallskip

  A finite sequence of real numbers $(a_{0} , a_{1} , a_{2} , . . . , a_{n})$ is said to be \emph{unimodal} if there is some $k \in \{0, 1, . . . , n\}$, called the \emph{mode} of the sequence, such that
  $$a_{0} \leq . . . \leq a_{k-1} \leq a_{k} \quad \text{and} \quad a_{k} \geq a_{k+1} \geq . . . \geq a_{n};$$
  the mode is unique if $a_{k-1} < a_{k}$ and $a_{k} > a_{k+1}$.
  A polynomial is called unimodal if the sequence of its coefficients is unimodal.

In the next section, we introduce the strong alliance polynomial and obtain some of its properties.
In Section 3, we compute the strong alliance polynomial for some graphs and study its coefficients; in particular, we show that some of them are unimodal.

%--------------------- Section 2: Strong Alliance Polynomial -------------------------
\
\section{Strong alliance polynomial}

In this section, we state the definition of strong alliance polynomial and some
of its properties.

Let $G$ be a graph with order $n$ and strong defensive alliance number $a(G)$.
We define the \emph{strong alliance polynomial} of $G$ with variable $x$ as follows:

\begin{equation}\label{eq:Poly2}
    a(G;x)= \displaystyle\sum_{i=a(G)}^{n} a_k(G) x^{i},
\end{equation}
where $a_{k}(G)$ is the number of strong defensive alliances with cardinality $k$ in $G$.

\smallskip

\begin{remark}\label{r:exit}
  For every graph $G$, there is $S\subset V$ with $\langle S\rangle$ is a connected component of $G$. Thus, $S$ in a strong defensive alliance in $G$ and $a(G;x)\neq0$.
\end{remark}

The cycle graph $C_4$ with $4$ vertices, for example, has one strong defensive alliance of cardinality $4$, four strong defensive alliances of cardinalities $3$ and $2$; its strong alliance polynomial is then $a(C_4;x) = x^4+ 4x^3+ 4x^2$. As another example, it is easy to see that the path graph $P_4$ with $4$ vertices has strong alliance polynomial $a(P_4;x)= x^4 + 2 x^3 + 3 x^2$.

\smallskip

An \emph{isomorphism of graphs} $G_1$ and $G_2$ is a bijection between the vertex sets of $G_1$ and $G_2$, $f: V(G_1)\Longrightarrow V(G_2)$ such that any two vertices $u$ and $v$ of $G_1$ are adjacent in $G_1$ if and only if $ƒ(u)$ and $ƒ(v)$ are adjacent in $G_2$. If an isomorphism exists between $G_1$ and $G_2$, then the graphs are called \emph{isomorphic} and we write $G_1 \simeq G_2$.

\begin{remark}
  Let $G_1$ and $G_2$ be isomorphic graphs. Then $a(G_1; x) = a(G_2; x)$.
\end{remark}

\smallskip

The \emph{disjoint union} of graphs, sometimes referred simply as \emph{graph union} is defined as follows. For two graphs $G_1=(V_1,E_1)$ and $G_2=(V_2,E_2)$ with disjoint vertex sets $V_1$ and $V_2$ (and hence disjoint edge sets), their union is the graph $G_1\cup G_2:=(V_1\cup V_2, E_1 \cup E_2)$. It is a commutative and associative operation.
%La \emph{unión disjunta} de grafos, en ocasiones nombrado simplemente por \emph{grafo unión}, es definido de la siguiente manera. Dado dos grafos $G_1=(V_1,E_1)$ y $G_2=(V_2,E_2)$ con conjuntos de vértices disjuntos $V_1$ y $V_2$ (y por tanto conjunto de aristas disjuntos), su grafo unión es $G_1\cup G_2:=(V_1\cup V_2, E_1 \cup E_2)$. Esta operación es conmutativa y asociativa.

\begin{remark}\label{r:disjUnion}
 If $G$ is not a connected graph, the problem of determine $a(G;x)$ is reduced to determine the strong alliance polynomials of each connected component. Thus, $a(G;x)$ is the sum of the polynomials of each connected component. In other word, if $G=G_1\cup G_2\cup\ldots\cup G_r$, then
 \[
 a(G;x)=\displaystyle\sum_{i=1}^{r} \ a(G_i;x).
 \]
\end{remark}

The $n$-vertex edgeless graph or \emph{empty graph} is the complement graph for the complete graph $K_n$, and therefore it is commonly denoted as $E_n$ for $n\ge1$.
Note that, for $n\ge1$, we have $a(E_n; x) = n x$.

\begin{corollary}\label{c:vacio}
 Let $n$ be a natural number with $n\ge1$.  If $a(G; x) = n x$, then $G\simeq E_n$.
\end{corollary}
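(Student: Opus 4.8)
The plan is to argue by contrapositive: assuming $G \not\simeq E_n$, I will exhibit a strong defensive alliance of cardinality at least $2$, which forces $a(G;x)$ to have a nonzero coefficient in some degree $i \geq 2$, hence $a(G;x) \neq nx$. First I would note that if $G \not\simeq E_n$ on $n$ vertices, then $G$ has at least one edge, so some connected component $H$ of $G$ has order $|V(H)| \geq 2$. The key observation is that the full vertex set $V(H)$ of a connected component is always a strong defensive alliance in $G$: for every $v \in V(H)$ we have $\delta_{\overline{V(H)}}(v) = 0$ since $v$ has no neighbors outside its own component, so $\delta_{V(H)}(v) = \delta(v) \geq 0 = \delta_{\overline{V(H)}}(v)$, verifying \eqref{cond-A-Defensiva} with $k=0$. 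This is essentially the content of Remark \ref{r:exit}, and I would simply invoke it; the point is that here $|V(H)| \geq 2$.

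Having produced a strong defensive alliance $S = V(H)$ with $|S| = |V(H)| =: j \geq 2$, I would conclude that $a_j(G) \geq 1$, so the coefficient of $x^j$ in $a(G;x)$ is positive with $j \geq 2$. But $nx$ has all coefficients zero except in degree $1$, so $a(G;x) \neq nx$, which is the desired contradiction. Conversely, it is immediate and already noted in the excerpt that $a(E_n;x) = nx$: the only connected components of $E_n$ are single vertices, each of which is (trivially) a strong defensive alliance, and these are the only strong defensive alliances since any set containing two vertices $u,v$ with, say, $v$ isolated would need $\delta_S(v) \geq \delta_{\overline S}(v)$, i.e.\ $0 \geq 0$ — so actually one must check more carefully that no larger alliance exists, but since every vertex has degree $0$, a set $S$ of size $\geq 2$ would contain a vertex $v$ with $\delta_S(v) = 0 = \delta_{\overline S}(v)$, which does satisfy the inequality; hence I should instead observe that in $E_n$ a connected induced subgraph has at most one vertex, and by the standing convention that we only count strong defensive alliances with connected induced subgraph, every counted alliance has cardinality $1$, giving exactly $n$ of them.

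The main obstacle is a bookkeeping subtlety rather than a deep one: one must be careful about the standing convention (stated in Section 1) that only strong defensive alliances with connected induced subgraph are counted, since without it the formula $a(E_n;x) = nx$ would fail. For the forward direction this convention is in our favour — the component $V(H)$ induces a connected subgraph by definition — so no extra care is needed there. Thus the whole argument reduces to: (i) $G \not\simeq E_n \Rightarrow G$ has a component of order $\geq 2$; (ii) that component is a connected strong defensive alliance of cardinality $\geq 2$ (Remark \ref{r:exit}); (iii) hence $a(G;x)$ has a monomial of degree $\geq 2$ and cannot equal $nx$. The converse is the already-recorded identity $a(E_n;x) = nx$, and together these give the equivalence in contrapositive form.
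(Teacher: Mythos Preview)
Your proposal is correct and follows essentially the same idea as the paper's proof: both hinge on Remark~\ref{r:exit}, producing from a nontrivial connected component a strong defensive alliance of cardinality at least $2$ that is incompatible with $a(G;x)=nx$. The only cosmetic difference is packaging---the paper first reads off from the degree-$1$ coefficient that $G$ contains $n$ isolated vertices and then invokes the disjoint-union formula (Remark~\ref{r:disjUnion}) to write $a(G;x)=nx+a(G^*;x)$ with $a(G^*;x)\neq 0$, whereas you go directly to a nonzero coefficient in degree $\geq 2$.
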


\begin{proof}
Assume first that $G$ is not isomorphic graph to $E_n$. Since term $nx$ appear in $a(G;x)$, the graph $G$ has $n$ isolate vertices. So, $a(G;x)=a(E_n;x) + a(G^*;x)$ where $G^*$ is a graph such that $G=E_n\cup G^*$. Thus, $a(G;x) \neq a(E_n;x)$ since Remark \ref{r:exit} gives $a(G^*;x)\neq 0$.
\end{proof}

\medskip

The following proposition shows general properties which satisfy the strong alliance polynomials.

\begin{proposition}\label{p:AlliPoly}
   Let $G$ be a graph. Then, $a(G;x)$ satisfies the following properties:

   \begin{enumerate}[i)]
     \item {All real zeros of $a(G;x)$ are non-positive numbers.}

     \item {The value $0$ is a zero of $a(G;x)$ with multiplicity $a(G)>0$.}

     \item {$a(G;1) < 2^{n}$, and it is the number of strong defensive alliance in $G$.}
   \end{enumerate}
\end{proposition}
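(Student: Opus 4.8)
The plan is to verify the three items directly from the definition of $a(G;x)$ in \eqref{eq:Poly2}, using that every coefficient $a_k(G)$ is a nonnegative integer and that the exponents range over $i\in\{a(G),a(G)+1,\dots,n\}$ with the leading behavior controlled by Remark~\ref{r:exit}.

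For item (i), I would write $a(G;x)=\sum_{i=a(G)}^{n} a_i(G)\,x^i$ and observe that if $x>0$ then every term $a_i(G)x^i$ is nonnegative, and at least one term is strictly positive (e.g. the term coming from the connected component furnished by Remark~\ref{r:exit}, or simply the lowest-degree term $a_{a(G)}(G)x^{a(G)}$ with $a_{a(G)}(G)\ge1$). Hence $a(G;x)>0$ for all $x>0$, so no positive real number can be a zero; the only possible real zeros are $0$ and negative numbers, i.e.\ non-positive numbers.

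For item (ii), I would factor $a(G;x)=x^{a(G)}\bigl(a_{a(G)}(G)+a_{a(G)+1}(G)x+\cdots+a_n(G)x^{n-a(G)}\bigr)$. The bracketed polynomial evaluated at $x=0$ equals $a_{a(G)}(G)$, which is the number of strong defensive alliances of minimum cardinality $a(G)$; this is at least $1$ because $a(G)$ is by definition attained. Therefore $0$ is a zero of $a(G;x)$ of multiplicity exactly $a(G)$, and $a(G)>0$ since a strong defensive alliance is nonempty (so $a(G)\ge1$).

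For item (iii), I would note that $a(G;1)=\sum_{i=a(G)}^{n} a_i(G)$ counts every strong defensive alliance in $G$ exactly once, sorted by cardinality; this is the total number of strong defensive alliances (with connected induced subgraph, per the paper's standing convention). To get the strict bound $a(G;1)<2^n$, observe that every strong defensive alliance is in particular a nonempty subset of $V$, so their number is at most $2^n-1<2^n$; alternatively one can note that $\emptyset$ is never counted and, when $a(G)\ge 2$, neither are the singletons, giving room to spare. The main (mild) obstacle is simply making the combinatorial bookkeeping precise — namely that the sum of coefficients genuinely enumerates alliances without overcounting, which is immediate since alliances of different cardinalities are distinct and $a_i(G)$ counts those of cardinality exactly $i$ — and being careful that the strict inequality $<2^n$ follows from excluding at least the empty set.
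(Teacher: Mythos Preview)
Your proof is correct and follows essentially the same approach as the paper: non-negativity of coefficients for (i), factoring out $x^{a(G)}$ for (ii), and bounding the sum of coefficients by the number of nonempty subsets of $V$ for (iii). Your write-up is in fact more careful than the paper's own proof---you explicitly verify that the cofactor after extracting $x^{a(G)}$ does not vanish at $0$, so that the multiplicity is exactly $a(G)$, whereas the paper only notes the common factor.
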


\begin{proof}
Since the coefficients of $a(G;x)$ are non-negatives, we have (1). We have a common factor $x^{a(G)}$ in $a(G;x)$ and $a(G)>0$.
By \eqref{eq:Poly2}, we have $a(G;1)= \displaystyle\sum_{i=a(G)}^{n}  a_i(G)$.
Thus, $a(G;1)$ has as upper bound the number of connected induced subgraph in $G$; this amount is less that $2^{n}$, since we have $2^{n}-1$ nonempty subsets of $V$.
\end{proof}

\medskip

The following theorem gives some properties on coefficients of the strong alliance polynomial of a graph.

\begin{theorem}\label{th:prop}
Let $G$ be a graph. Then, the following properties are satisfied

\begin{enumerate}[i)]
\item $0 \leq a_2(G) \leq m$; and $a_2(G) =m $ if and only if $G$ is a cycle graph, a path graph, or disjoint union of some of them.
\item $a_k(G) = 0$ if and only if there is $v\in S$ such that $\d_S(v) \leq \left \lfloor \frac{\d (v) -1}{2} \right \rfloor$ in every $S \subset V(G)$ with $|S|=k$.
\item $a_n(G) = 1$ if and only if $G$ is connected.
%\item[iv)] Si $\d (G) \geq 2k$ para algún $k\in \mathbb{N}$ y $G$ no tiene vértice de conexión, entonces $a_{n-k}(G) = {n\choose k}$,
\item $a_2(G) = 1$ if and only if there exists an unique edge $uv \in E(G)$ with $ \d (u), \d (v)\leq 2$.
\end{enumerate}
\end{theorem}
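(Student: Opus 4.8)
The unifying idea is to rewrite the defensive $0$-alliance condition \eqref{cond-A-Defensiva} as a bare degree inequality. For $S\subseteq V$ and $v\in S$ we have $\delta_{\overline S}(v)=\delta(v)-\delta_S(v)$, so \eqref{cond-A-Defensiva} with $k=0$ says $2\,\delta_S(v)\ge\delta(v)$, i.e. $\delta_S(v)\ge\lceil\delta(v)/2\rceil$; since all the quantities are integers, its negation is exactly $\delta_S(v)\le\lfloor(\delta(v)-1)/2\rfloor$. I would open the proof by recording this, together with the case $|S|=2$: if $S=\{u,v\}$ and $\langle S\rangle$ is connected, then $uv\in E(G)$ and $\delta_S(u)=\delta_S(v)=1$, so $S$ is a strong defensive alliance if and only if $\delta(u)\le 2$ and $\delta(v)\le 2$.

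\emph{Parts (i) and (iv).} By the previous remark, $a_2(G)$ is exactly the number of edges of $G$ whose two endpoints both have degree at most $2$; being a count of edges this gives $0\le a_2(G)\le m$, and $a_2(G)=1$ iff precisely one such edge exists, which is (iv). For the equality case in (i): $a_2(G)=m$ means every edge has both endpoints of degree $\le 2$, which — since each non-isolated vertex is an endpoint of some edge while isolated vertices have degree $0$ — is the same as $\Delta(G)\le 2$; I would then invoke the standard structure theorem that a graph of maximum degree at most $2$ is a disjoint union of paths and cycles (a single vertex being viewed as a degenerate path) to conclude that $G$ is a path, a cycle, or a disjoint union of such graphs. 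Conversely, if $\delta(w)\ge 3$ for some vertex $w$ then no edge at $w$ is counted, so $a_2(G)\le m-1<m$, which settles the remaining implication.

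\emph{Part (ii).} This is obtained by unwinding the definition of $a_k(G)$: the equality $a_k(G)=0$ means that no $S\subseteq V$ with $|S|=k$ is a strong defensive alliance, equivalently that every such $S$ contains a vertex $v$ for which \eqref{cond-A-Defensiva} fails, and by the arithmetic of the first paragraph that failure reads precisely $\delta_S(v)\le\lfloor(\delta(v)-1)/2\rfloor$. (Here one keeps in mind the running convention that the alliances being counted have connected induced subgraph, so in the forward direction one uses that $a_k(G)=0$ rules out a connected alliance of size $k$; the substance is the degree reformulation of \eqref{cond-A-Defensiva}.)

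\emph{Part (iii).} The only subset of $V$ of cardinality $n$ is $V$ itself, and since $\delta_{\overline V}(v)=0\le\delta_V(v)$ for every $v$, the set $V$ always satisfies \eqref{cond-A-Defensiva}; as $\langle V\rangle=G$, the set $V$ is counted by $a_n$ exactly when $G$ is connected, whence $a_n(G)=1$ if $G$ is connected and $a_n(G)=0$ otherwise. The hard part of the whole statement — essentially the only step that is not bookkeeping — is the classification of graphs with $\Delta\le 2$ used in part (i); once \eqref{cond-A-Defensiva} has been phrased as a degree inequality, parts (ii), (iii) and (iv) are immediate.
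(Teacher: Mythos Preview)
Your proof is correct and follows essentially the same approach as the paper: both arguments hinge on rewriting the $0$-alliance condition as the degree inequality $\delta_S(v)\ge\lceil\delta(v)/2\rceil$, specializing to $|S|=2$ to identify the relevant edges, and then invoking the classification of graphs with $\Delta\le 2$. Your presentation is somewhat more streamlined (stating the degree reformulation once at the outset and treating (i) and (iv) together), and you explicitly flag the connectedness convention in part~(ii), a subtlety the paper's own proof passes over silently.
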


\begin{proof}
We prove separately each item.
\begin{enumerate}[$i)$]
\item  On the one hand, it is clear that $a_k(G)\geq 0$, in particular $a_2(G) \geq 0$. Besides, since $a_2(G)$ is the number of subsets of two vertices which are strong defensive alliance in $G$, then the amount of these subsets has as upper bound the size of $G$, and so, $a_2(G)\leq m$.

    On the other hand, notice that if $G$ is no connected then the proof can be reduced to analyze each connected component. Without loss of generality we can assume that $G$ is connected.
    First, we claim that $a_2(G) =m$ if and only if $\delta (v_i) \leq 2$ for every  $v_i \in V$.
    Let $uv\in E$ with $S:=\{u,v\}$ a strong defensive alliance in $G$; hence, we have $\d_S(u)=1=\d_S(v)$, $1=\delta_S(u) \geq \delta_{\overline{S}}(u)$ and $1=\delta_S(v) \geq \delta_{\overline{S}}(v)$, thus, we have $\delta(u) \leq 2$ and $\delta(v) \leq 2$.
    If $\delta (v_i) \leq 2$ for every $v_i \in V$, then taking $S :=\{ u,v \}\subset V$ with  $u\sim v$ we have that $\delta_S(u)+ \delta_{\overline{S}}(u) = \delta(u) \leq 2$, and so, $\d_{\overline{S}}(u) \leq 1 = \delta_S(u)$; analogously we obtain $\d_{\overline{S}}(v) \leq \d_S(v)$; in fact, $S$ is a strong defensive alliance in $G$. Thus, we have proved the claim.
    Note that it suffices prove that $G$ is an isomorphic graph to a path or cycle graph if and only if $\d(v_i)\leq 2$ for every $v_i \in V$.
    It is clear that, if $G\simeq P_n$ or $G\simeq C_n$ for any $n\in\mathbb{N}$ then $a_2(G)=m$.
    Assume now that $\d(v_i)\leq 2$ for every $v_i \in V$.
    If $\d(v_1)=1$, then there is $v_2\in V$ with $v_1\sim v_2$. Hence, if $\d(v_2)=1$ then $G\simeq P_2$, else $\d(v_2)=2$ and there is $v_3$ with $v_2 \sim v_3$. Now, if $\d(v_3)=1$ then $G\simeq P_3$, else $\d(v_3)=2$ and iterating this process we obtain a vertex $v_n$ with $\d(v_n)=1$, and so, $G\simeq P_n$. Similarly, we may obtain the result when there no is $v\in V$ with $\d(v)=1$, i.e., $G\simeq C_n$.

\item On the one hand, if $a_k(G) = 0$ for any $k\in \mathbb{N}$, then there is no $S\subset V$ with $|S|=k$ which is strong defensive alliance in $G$. Hence, for every $S\subset V$ with $|S|=k$ there is $v\in S$ such that $\d_S(v)< \d_{\overline{S}}(v)$, so, $2\d_S(v) < \d(v)$ and $2\d_S(v) \leq \d(v) - 1$. Thus, since $\d_S(v)$ in an integer number we obtain $\d_S(v)\leq \left \lfloor \frac{\d(v) -1}{2} \right \rfloor$.
    On the other hand, if there is $v\in S$ with $\d_S(v) \leq \left \lfloor \frac{\d (v) -1}{2} \right \rfloor$ in every $S \subset V$ with $|S|=k$, then $\d_S(v) \leq \frac{\d (v) -1}2$ and since $\d_S(u)+\d_{\overline{S}}(u)=\d(u)$ we conclude that $\d_S(v) < \d_{\overline{S}}(v)$, therefore, there is no $S\subset V$ with $|S|=k$ which is strong defensive alliance. Thus, we have $a_k(G)=0$.

\item Note that $V$ is a strong defensive alliance if and only if $G$ is connected, thus, $a_n(G)=1$ if and only if $G$ is connected.

%\item[iv)] $a_{n-k}(G)$ son todos los $S\subset V$ de cardinalidad $n-k$ que son alianza defensiva, demostremos que cualquier conjunto con esta cardinalidad es alianza defensiva.\\
%Como la cardinalidad de $S$ es $n-k$, la cardinalidad de $\overline{S}$ es $k$, si tomamos $v\in \langle S\rangle$ sabemos que
%$$\d_{\overline{S}}(v)\leq k$$
%$$2k\leq \d(v)=\d_{\overline{S}}(v) + \d_S(v)$$
%$$2k \leq \d_S(v) +k$$
%$$k\leq \d_S(v)$$
%y por tanto $$\d_{\overline{S}}(v) \leq \d_S(v)$$
%lo cual quiere decir que $S$ es alianza defensiva.

\item  If $a_2(G) = 1$ then there exists an unique edge $uv$ such that $S=\{v,u\}$ is a strong defensive alliance in $G$, besides, using the same argument in item i) we obtain that $\d(u)\le2$ and $\d(v)\leq 2$.
    If $uv\in E$ is the unique edge such that $\d(u), \d(v)\leq 2$, then; $S=\{u,v\}$ is a strong defensive alliance, but, any $S_1=\{u_1, v_1\} \subset V$ with $S_1\neq S$ is no strong defensive alliance since we have either $u_1\nsim v_1$, $\d(u_1)>2$ or $\d(v_1)>2$. So, we have $a_2(G)=1$.
\end{enumerate}
\end{proof}

%
%The following lemma is a well know result about defensive alliances.
%
%\begin{lemma}\label{l:UpboundDA}
%  If $G$ is a graph with order $n$, then $$a(G)\leq \left \lfloor \frac{n}{2}  \right \rfloor +1.$$
%\end{lemma}

\
\section{Strong alliance polynomials of some class of graphs.}%\label{Sec3}

In this Section, we obtain the explicit formula for strong alliance polynomials of some classical class of graphs using combinatorial arguments. Besides, we verify unimodality of strong alliance polynomials of path, cycle, complete and complete bipartite graphs.

\begin{proposition}\label{p:computos}
${}$
\begin{enumerate}
 \item $a(P_n; x) = \sum^{n}_{i=2}(n+1-i)x^{i}$, for $n\ge2$.

 \item $a(C_n; x) = n\sum_{i=2}^{n-1}x^i + x^n$, for $n\ge3$.

 \item $a({K _n; x}) = \sum^{n}_{k=\left\lceil \frac{n+1}{2} \right\rceil} {n \choose k} \ x^{k}$, for $n\ge1$.

% \item $a(S_n; x) = \left\{ \begin{array}{ll}
%xa({K}_{n-1}; x), & \text{if }\  n \text{ is even,} \\
%xa({K}_{n-1}; x) + {n-1 \choose \left\lceil \frac{n-1}{2}\right\rceil} \ x^{\left\lceil \frac{n+1}{2}\right\rceil}, \quad & \text{if } \ n \text{ is odd;}
%\end{array}\right. $ for $n\ge2$.
\end{enumerate}
\end{proposition}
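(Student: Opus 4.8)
The plan is to handle each of the three families separately by a direct combinatorial count of the connected subsets $S$ that satisfy the strong-alliance condition $\delta_S(v)\ge\delta_{\overline S}(v)$ for every $v\in S$. In all three cases the key simplification is that we only need to count connected induced subgraphs $\langle S\rangle$ and then check the defensive inequality vertex by vertex; for vertices of small degree the inequality is often automatic (as already exploited in Theorem~\ref{th:prop}), so the real work is at the boundary vertices of $S$.

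For the path $P_n$ with vertices $v_1,\dots,v_n$ in a line, a connected subset of cardinality $i\ge2$ is exactly a run of $i$ consecutive vertices, and there are $n+1-i$ such runs. Each such run is automatically a strong defensive alliance: every vertex of the run has degree at most $2$, and by the claim proved in item~(i) of Theorem~\ref{th:prop} (a two-element set on an edge with both endpoints of degree $\le2$ is a strong defensive alliance, and more generally the inequality $\delta_{\overline S}(v)\le1\le\delta_S(v)$ holds for every interior vertex of the run, while for the two endpoints of the run $\delta_S=1$ and $\delta_{\overline S}\le1$). Hence $a_i(P_n)=n+1-i$ for $2\le i\le n$ and $a_1(P_n)=0$ because an isolated vertex of $P_n$ exists only when $n=1$; this gives the stated formula. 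The cycle $C_n$ is the same analysis but now a connected subset of size $2\le i\le n-1$ is again an arc of $i$ consecutive vertices, and there are exactly $n$ such arcs (one for each starting vertex), each one a strong defensive alliance by the same degree-$\le2$ argument; the one remaining connected subset is $S=V$, contributing the single term $x^n$, and no $2$-subset fails because every vertex has degree $2$. This yields $a(C_n;x)=n\sum_{i=2}^{n-1}x^i+x^n$.

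For the complete graph $K_n$, every nonempty subset $S$ induces a connected (complete) subgraph, so connectivity is no constraint. If $|S|=k$ then each $v\in S$ has $\delta_S(v)=k-1$ and $\delta_{\overline S}(v)=n-k$, so $S$ is a strong defensive alliance iff $k-1\ge n-k$, i.e. $k\ge\frac{n+1}{2}$, i.e. $k\ge\lceil\frac{n+1}{2}\rceil$. Since there are $\binom nk$ subsets of size $k$, we get $a(K_n;x)=\sum_{k=\lceil(n+1)/2\rceil}^{n}\binom nk x^k$. I should also note that this is consistent with the definition of the polynomial: the strong defensive alliance number $a(K_n)=\lceil\frac{n+1}{2}\rceil$, which matches the lower limit of the sum.

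The only genuinely delicate point is the boundary bookkeeping in the path and cycle cases — making sure the endpoints of a run or arc really do satisfy $\delta_S(v)\ge\delta_{\overline S}(v)$ and, for the cycle, that the full vertex set $V$ is treated separately (for $S=V$ one has $\delta_{\overline S}(v)=0$, so it is trivially an alliance) while arcs of length $n-1$ and shorter are counted with multiplicity $n$. I expect no obstacle beyond this careful enumeration; the complete-graph case is immediate once connectivity is observed to be free.
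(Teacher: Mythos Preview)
Your proposal is correct and follows essentially the same approach as the paper: in each case you identify the connected induced subgraphs of the given size (runs in $P_n$, arcs in $C_n$, arbitrary subsets in $K_n$) and verify the strong-alliance inequality directly at each vertex. If anything, you are slightly more explicit than the paper in checking the boundary vertices of runs and arcs, whereas the paper asserts that the relevant $\langle S\rangle\simeq P_k$ are strong defensive alliances without spelling out the $\delta_S\ge\delta_{\overline S}$ check.
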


\begin{proof}
${}$
\begin{enumerate}
\item {We compute the coefficients of $a(P_n; x)$ starting by its strong defensive alliance number $a(P_n)$. Clearly, we have $a(P_n)=2$.
    By Theorem \ref{th:prop} i) we have $a_2(G) = n-1$.
    Assume now that $n\ge3$ and let $k$ be a natural number with $2\le k\le n$.
    Since every $S\subset V(P_n)$ with $|S|=k$ which is strong defensive alliance %(with $\langle S\rangle$ connected)
    verify that $\langle S\rangle \simeq P_k$, we have that $a_k(P_n)$ is the number of different $P_k$ in $P_n$. Therefore, we obtain $a_k(P_n)=n-k+1$.}

\item {Similarly to the previous item, we have $a(C_n)=2$. By Theorem \ref{th:prop} iii) we have $a_n(C_n)=1$. Let $k$ be a natural number with $2\le k\le n-1$.
    Since every $S\subset V(C_n)$ with $|S|=k$ which is strong defensive alliance verify that $\langle S\rangle \simeq P_k$, we have that $a_k(C_n)$ is the number of different $P_k$ in $C_n$. Therefore, we obtain $a_k(C_n)=n$.}

\item {Let $S \subset V(K_n)$. So, we have $\d_S(v_i) =|S|-1$ and $\d_S(v_i) + \d_{\overline{S}}(v_i) = n-1$ for every $v_i\in S$. Hence, in order to obtain a strong defensive alliance in $K_n$ it suffices to $\d_S(v_i)\ge \left \lceil \frac{n-1}{2} \right \rceil$ for every $v_i\in S$, and so, we obtain $a(K_n)=\left \lceil \frac{n+1}{2} \right \rceil$.
    Furthermore, if $|S|\ge \left \lceil \frac{n+1}{2} \right \rceil$, then $S$ is a strong defensive alliance in $K_n$; thus, we have $a_k(K_n)={n \choose k}$ for every $\left \lceil \frac{n+1}{2} \right \rceil \le k \le n$.}

%\item {Let $V(S_n)=\{v_1,\ldots,v_n\}$ with $\d(v_1) = n-1$ y $\d(v_i) = 1$ for $2 \leq i \leq n$. Let $S$ be a strong defensive alliance in $S_n$. Hence, $v_1\in S$ since otherwise $\langle S\rangle$ is no connected; besides, since $\d_S(v)=1=\d(v)$ for every $v\in S\setminus\{v_1\}$ and $|S|-1 = \d_S(v_1)\ge \d_{\overline{S}}(v_1)$ we obtain that $|S|\ge \left \lceil \frac{n+1}{2} \right \rceil$. It follows easily that every $S$ with cardinality greater than or equal $\left \lceil \frac{n+1}{2} \right \rceil$ and $v_1\in S$ is a strong defensive alliance, therefore, $a(S_n)=\left \lceil \frac{n+1}{2} \right \rceil$ and $a_k(S_n)={n-1 \choose k-1}$ for $\left \lceil \frac{n+1}{2} \right \rceil \le k \le n$.}
\end{enumerate}
\end{proof}

The following results show that many strong alliance polynomials are unimodal.
Note that all of them satisfice that its coefficients are decreasing.

\begin{corollary}\label{c:unimodal}
 The strong alliance polynomials of path, cycle and complete graphs are unimodal.
\end{corollary}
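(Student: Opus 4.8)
The plan is to read off the closed forms for $a(P_n;x)$, $a(C_n;x)$, and $a(K_n;x)$ from Proposition \ref{p:computos} and, in each case, verify directly that the full coefficient sequence $(a_0(G),a_1(G),\dots,a_n(G))$ is unimodal. In every case the coefficients with index strictly below the strong defensive alliance number $a(G)$ vanish, so the sequence begins with a block of zeros; the substantive point is that the coefficients from index $a(G)$ onward form a weakly decreasing sequence. Granting that, unimodality follows at once, with mode $k=a(G)$ (not necessarily unique).

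For the path, Proposition \ref{p:computos} gives $a_i(P_n)=n+1-i$ for $2\le i\le n$ together with $a_0(P_n)=a_1(P_n)=0$, so the coefficient sequence is $0,0,n-1,n-2,\dots,1$: it increases up to index $2$ and then strictly decreases, hence is unimodal with mode $2$. For the cycle, the proposition gives $a_i(C_n)=n$ for $2\le i\le n-1$ and $a_n(C_n)=1$, again with $a_0(C_n)=a_1(C_n)=0$, so the sequence $0,0,n,\dots,n,1$ increases, is then constant, and finally drops, hence is unimodal.

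The complete graph is the only case requiring a small computation. Here $a_k(K_n)=\binom{n}{k}$ for $\lceil(n+1)/2\rceil\le k\le n$ and $a_k(K_n)=0$ otherwise, so it suffices to show that $\binom{n}{k}$ is decreasing on the range $k\ge\lceil(n+1)/2\rceil$. I would use the ratio $\binom{n}{k}/\binom{n}{k-1}=(n-k+1)/k$, which is at most $1$ precisely when $k\ge(n+1)/2$; since $\lceil(n+1)/2\rceil\ge(n+1)/2>n/2$, every $k$ in the relevant range satisfies this, giving $\binom{n}{k_0}\ge\binom{n}{k_0+1}\ge\dots\ge\binom{n}{n}=1$ for $k_0:=\lceil(n+1)/2\rceil$. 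Prepending the zero coefficients once more yields a unimodal sequence with mode $k_0$.

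Overall this is a verification rather than a genuinely hard argument once the explicit polynomials are available; the only place that needs a moment of care is the monotonicity of the binomial coefficients on the truncated range, i.e. confirming that $\lceil(n+1)/2\rceil$ already lies at or past the peak of the $n$-th row of Pascal's triangle, which the ratio test settles uniformly in the parity of $n$.
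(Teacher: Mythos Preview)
Your proposal is correct and follows the same approach as the paper: the paper offers no formal proof for this corollary beyond the remark that the coefficient sequences from Proposition~\ref{p:computos} are decreasing, which is precisely what you verify case by case. Your treatment is in fact more thorough than the paper's, particularly in the $K_n$ case where you justify the monotonicity of the binomial tail via the ratio test.
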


%\begin{proof}
% The result is immediate by Proposition \ref{p:computos}. Since $a(P_n; x) = \sum^{n}_{i=2}(n+1-i)x^{i}$ we have that strong alliance polynomial of a path graph is unimodal with unique mode $a_2=n-1$. Similarly, we have $a(C_n; x) = n\sum_{i=2}^{n-1}x^i + x^n$, so, strong alliance polynomial of a cycle graph is unimodal with mode $n$. Finally, we have $a({K _n; x}) = \sum^{n}_{k=\left\lceil \frac{n+1}{2} \right\rceil} {n \choose k} \ x^{k}$, then we have that strong alliance polynomial of a complete graph is unimodal with unique mode $a_{\lceil\frac{n+1}2\rceil}={n \choose \lceil\frac{n+1}2\rceil}$.
%\end{proof}

Now, we recall that ${z \choose y}$ is equal zero if $y$ is no integer.

\begin{theorem}\label{t:k_nm}
For $n,m\ge1$, we have
\[
a({K}_{n,m}; x)  = \left( a(K_n;x) +  {n \choose \frac{n}2} x^{\frac{n}2}  \right)
\left( a(K_m;x) +  {m \choose \frac{m}2} x^{\frac{m}2}  \right).
\]
\end{theorem}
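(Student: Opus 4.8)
The plan is to characterize, for the complete bipartite graph $K_{n,m}$ with parts $A$ (of size $n$) and $B$ (of size $m$), exactly which subsets $S\subseteq V$ are strong defensive alliances, and then count them. First I would dispose of the case where $S$ meets only one part: if $S\subseteq A$, then $\langle S\rangle$ is edgeless, so $S$ is a strong defensive alliance only when $|S|=1$ (an isolated vertex in the induced subgraph), and similarly for $S\subseteq B$. Since we are only counting strong defensive alliances with connected induced subgraph (as stipulated in the paper), the singletons $\{v\}$ with $v\in A$ or $v\in B$ contribute $n+m$ alliances of cardinality $1$; but note $a(K_n;x)$ and $a(K_m;x)$ as given in Proposition \ref{p:computos} start at cardinality $\lceil (n+1)/2\rceil$ and contain no linear term for $n,m\ge 2$, so the bookkeeping of the low-order terms needs care — I would check the claimed identity's constant behaviour at $x^1$ and small exponents directly against these edge cases.

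The main computation is the case $S=S_A\cup S_B$ with $S_A=S\cap A\neq\emptyset$ and $S_B=S\cap B\neq\emptyset$; here $\langle S\rangle=K_{|S_A|,|S_B|}$ is automatically connected. For $v\in S_A$ we have $\delta_S(v)=|S_B|$ and $\delta_{\overline S}(v)=m-|S_B|$, so the defensive $0$-alliance condition at $v$ reads $|S_B|\ge m-|S_B|$, i.e. $|S_B|\ge m/2$, i.e. $|S_B|\ge\lceil m/2\rceil$. Symmetrically, the condition at vertices of $S_B$ is $|S_A|\ge\lceil n/2\rceil$. Crucially these two conditions are independent of \emph{which} vertices are chosen and depend only on the sizes, so the number of such $S$ with $|S_A|=k$, $|S_B|=\ell$ is $\binom{n}{k}\binom{m}{\ell}$ whenever $k\ge\lceil n/2\rceil$ and $\ell\ge\lceil m/2\rceil$, and $0$ otherwise. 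Summing, the contribution of these "mixed" alliances to $a(K_{n,m};x)$ is
\[
\Bigl(\sum_{k\ge\lceil n/2\rceil}^{n}\binom{n}{k}x^{k}\Bigr)\Bigl(\sum_{\ell\ge\lceil m/2\rceil}^{m}\binom{m}{\ell}x^{\ell}\Bigr).
\]

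It remains to match this with the stated right-hand side. Compare $\sum_{k\ge\lceil n/2\rceil}\binom{n}{k}x^k$ with $a(K_n;x)=\sum_{k\ge\lceil(n+1)/2\rceil}\binom{n}{k}x^k$: when $n$ is odd these are equal and $\binom{n}{n/2}=0$ by the convention recalled just before the theorem, so the correction term vanishes; when $n$ is even, $\lceil n/2\rceil=n/2$ while $\lceil(n+1)/2\rceil=n/2+1$, so the factor picks up exactly the extra term $\binom{n}{n/2}x^{n/2}$, which is precisely $a(K_n;x)+\binom{n}{n/2}x^{n/2}$. The same reconciliation applies to the $m$-factor. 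Finally I would observe that the product of the two corrected factors, when expanded, has no $x^1$ term unless one of $n,m$ equals $1$, and in the boundary cases $n=1$ or $m=1$ one checks by hand that $K_{1,m}$ is a star and the formula still reproduces the singleton contributions correctly — this degenerate check is the step I expect to be the most error-prone, since the general size-condition argument implicitly assumed both parts were nonempty and the "connected induced subgraph" convention interacts subtly with isolated-vertex alliances.
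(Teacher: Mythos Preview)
Your main computation --- characterizing the strong defensive alliances of $K_{n,m}$ as exactly the sets $S=S_A\cup S_B$ with $|S_A|\ge\lceil n/2\rceil$ and $|S_B|\ge\lceil m/2\rceil$, factoring the resulting double sum, and then reconciling each factor with $a(K_n;x)+\binom{n}{n/2}x^{n/2}$ via the parity case split --- is correct and is exactly the paper's argument (the paper is in fact less explicit than you are about the parity reconciliation).

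There is one genuine misstep in your preamble, though: singletons are \emph{not} strong defensive alliances in $K_{n,m}$. For $v\in A$ and $S=\{v\}$ one has $\delta_S(v)=0$ while $\delta_{\overline S}(v)=\delta(v)=m\ge 1$, so the $0$-alliance inequality fails; the same holds for $v\in B$. More generally, any nonempty $S$ contained in a single part fails either the alliance condition (if $|S|=1$) or the connectedness convention (if $|S|\ge 2$). Thus the ``mixed'' case is the \emph{only} case, there are no $x^1$ contributions to track, and the delicate low-order bookkeeping and boundary checks you flagged as error-prone simply evaporate. Once this is corrected, your proof and the paper's are the same.
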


\begin{proof}
Denote by $N$ and $M$ the parts of vertices set of the complete bipartite graph $K_{n,m}$, i.e., $N\cup M=V(K_{n,m})$, $|N|=n$ and $|M|=m$. Hence, we have $\d(v)=m$ for every $v\in N$ and $\d(w)=n$ for every $w\in M$.
Clearly, $S\subset V(K_{n,m})$ is a strong defensive alliance in $K_{n,m}$ if and only if there are $S_N\subset N$ and $S_M\subset M$ with $S=S_N\cup S_M$, $|S_N|\ge \lceil \frac{n}{2}\rceil$ and $|S_M|\ge \lceil \frac{m}{2}\rceil$.
Previous statement is straightforward since $\d(v)=|S_M|$ for every $v\in S_N$ and $\d(w)=|S_N|$ for every $w\in S_M$.
Therefore, $a(K_{n,m})=\lceil \frac{n}{2}\rceil + \lceil \frac{m}{2}\rceil$ and for $\lceil \frac{n}{2}\rceil + \lceil \frac{m}{2}\rceil \le k \le n+m$, we have
\[
a_k({K}_{m,n}) = \sum_{i = \lceil \frac{m}{2}\rceil}^{k-\lceil \frac{n}{2}\rceil} {m \choose i} {n \choose k - i}.
\]
Then, we obtain
\[
a({K}_{m,n}; x)  =\sum_{k= \lceil \frac{m}{2}\rceil + \lceil \frac{n}{2}\rceil}^{n+m} \left( \sum _{ i = \lceil \frac{m}{2}\rceil}^{k-\lceil \frac{n}{2}\rceil} {m \choose i} {n \choose k - i}  \right)x^k = \left(\sum_{i=\lceil\frac{n}2\rceil}^{n} {n\choose i} x^i\right) \left(\sum_{j=\lceil\frac{m}2\rceil}^{m} {m\choose j} x^j\right).
\]
\end{proof}

The complete bipartite graph $K_{n-1,1}$ is called an $n$ \emph{star graph} $S_n$, for every $n\ge2$.
We have the following consequence of Theorem \ref{t:k_nm}.

\begin{corollary}\label{c:Sn}
 For $n\ge2$, we have
\[
a(S_n;x)=
\left\{ \begin{array}{ll}
x\, a({K}_{n-1}; x), & \text{if }\  n \text{ is even,} \\
x\, a({K}_{n-1}; x) + {n-1 \choose \frac{n-1}{2}} \ x^{\frac{n+1}{2}}, \quad & \text{if } \ n \text{ is odd.}
\end{array}\right.
\]
\end{corollary}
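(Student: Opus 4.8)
The plan is to read the result off Theorem~\ref{t:k_nm} by specializing the two parts of the bipartition to have sizes $n-1$ and $1$; this is legitimate since $S_n = K_{n-1,1}$ and $n-1\ge 1$ whenever $n\ge 2$. Setting $m=1$ in the product formula, the second factor becomes $a(K_1;x) + \binom{1}{1/2}\,x^{1/2}$.

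First I would dispose of this second factor. By the convention recalled just before Theorem~\ref{t:k_nm}, $\binom{1}{1/2}=0$ because $1/2\notin\ZZ$, so the factor reduces to $a(K_1;x)$; and evaluating the formula of Proposition~\ref{p:computos}(3) at $n=1$ gives $a(K_1;x)=x$ (equivalently, the single vertex of $K_1$ is its own, and only, strong defensive alliance). Hence Theorem~\ref{t:k_nm} gives
\[
a(S_n;x) = x\left( a(K_{n-1};x) + \binom{n-1}{\frac{n-1}{2}}\,x^{\frac{n-1}{2}} \right).
\]

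Then I would split on the parity of $n$. If $n$ is even then $n-1$ is odd, so $\frac{n-1}{2}\notin\ZZ$ and $\binom{n-1}{(n-1)/2}=0$, leaving $a(S_n;x)=x\,a(K_{n-1};x)$. If $n$ is odd then $\frac{n-1}{2}$ is a nonnegative integer and distributing the factor $x$ turns the extra summand into $\binom{n-1}{(n-1)/2}\,x^{1+(n-1)/2}=\binom{n-1}{(n-1)/2}\,x^{(n+1)/2}$, which is precisely the stated expression.

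There is essentially no obstacle: the only points that need care are the degenerate evaluation $a(K_1;x)=x$ and the bookkeeping with the non-integer binomial convention, both of which are immediate. As an alternative one could argue combinatorially — a strong defensive alliance of $S_n$ with connected induced subgraph must consist of the center together with a set of $j\ge\lceil\frac{n-1}{2}\rceil$ leaves, whence $a(S_n;x)=x\sum_{j=\lceil (n-1)/2\rceil}^{n-1}\binom{n-1}{j}x^j$, and one then compares this lower summation index with $\lceil\frac{n}{2}\rceil$, the one appearing in $a(K_{n-1};x)$ — but invoking Theorem~\ref{t:k_nm} is shorter.
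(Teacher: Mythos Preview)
Your argument is correct and is exactly the derivation the paper has in mind: the corollary is stated immediately after Theorem~\ref{t:k_nm} as a direct consequence, and your specialization $S_n=K_{n-1,1}$ together with the non-integer binomial convention and $a(K_1;x)=x$ is precisely how that consequence unfolds. The only thing to add is that the paper gives no written proof at all, so your write-up is already more detailed than what appears there.
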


\begin{remark}\label{r:SnUnim}
 The strong alliance polynomials of star graphs are unimodals. Note that its coefficients are a decreasing subsequence of binomial numbers.
\end{remark}

%\begin{theorem}\label{t:KnmUnim}
% The strong alliance polynomials of complete bipartite graph is unimodal.
%\end{theorem}
%
%\begin{proof}
%\textcolor[rgb]{1.00,0.00,0.00}{FALTA DEMOS}.
%\end{proof}

Let us consider two star graphs $S_{r},S_{t}$ with $r,t$ vertices. Denote by $v_r\in V(S_r)$ and $w_t\in V(S_t)$ its central vertices. %Choose two vertex $v \in E(S_{r+1})$ and $w \in E(S_{t+1})$ (different from the central vertex, respectively).
We define the double star graph $S_{r,t}$ as the graph obtained by disjoint union of these star graphs with addition one edge joining $v_r$ and $w_t$, see Figure \ref{fig:S_rt}.

\begin{figure}[h]
\centering
\scalebox{0.7}
{\begin{pspicture}(-1.4,-2.4)(7.4,2.4)
\psline[linewidth=0.04cm,dotsize=0.07055555cm 2.5]{*-*}(2,0)(4,0)
\psline[linewidth=0.04cm,dotsize=0.07055555cm 2.5]{*-*}(0,2)(2,0)(0,0.6)
\psline[linewidth=0.04cm,dotsize=0.07055555cm 2.5]{*-*}(6,2)(4,0)(6,0.6)
\psline[linewidth=0.04cm,dotsize=0.07055555cm 2.5]{*-*}(0,-2)(2,0)(4,0)(6,-2)
\uput[180](0,2){\Large{$v_1$}}
\uput[180](0,0.6){\Large{$v_2$}}
\uput[180](0,-2){\Large{$v_{r-1}$}}
\uput[0](6,2){\Large{$w_1$}}
\uput[0](6,0.6){\Large{$w_2$}}
\uput[0](6,-2){\Large{$w_{s-1}$}}
\uput[270](0,0.2){\Huge{$\cdot$}}
\uput[270](0,-0.2){\Huge{$\cdot$}}
\uput[270](0,-0.6){\Huge{$\cdot$}}
\uput[270](0,-1){\Huge{$\cdot$}}
\uput[270](6,0.2){\Huge{$\cdot$}}
\uput[270](6,-0.2){\Huge{$\cdot$}}
\uput[270](6,-0.6){\Huge{$\cdot$}}
\uput[270](6,-1){\Huge{$\cdot$}}
\end{pspicture}}
\caption{Double star graph $S_{r,t}$ with $r+t$ vertices.} \label{fig:S_rt}
\end{figure}
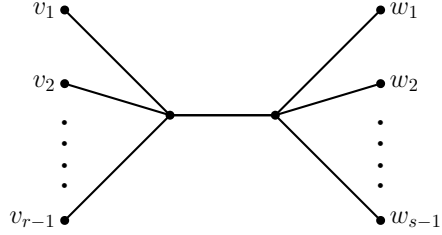

\begin{theorem}\label{t:S_rt}
For $r,t\geq 3$, we have
\[
\begin{aligned}
a(S_{r,t}; x) = \ \ & a(S_r; x) \, + \, a(S_t; x) \,-\,{r-1 \choose (r-1)/2} x^{(r-1)/2}   \,-\,{t-1 \choose (t-1)/2} x^{(t-1)/2} \\
& + \,\left( {r-1 \choose \frac{r}{2}-1} x^{\frac{r}{2}} + a(S_r; x)\right) \left( {t-1 \choose \frac{t}{2}-1} x^{\frac{t}{2}} + a(S_t; x) \right) .
\end{aligned}
\]
\end{theorem}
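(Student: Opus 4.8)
The plan is to enumerate all connected strong defensive alliances of $S_{r,t}$ directly, grouping them according to which of the two centres $v_r$, $w_t$ they contain. I would start by recording the degrees in $S_{r,t}$: $\delta(v_r)=r$ and $\delta(w_t)=t$, while every other vertex is a leaf of degree $1$. Since no singleton is a strong defensive alliance and every non-central vertex is a leaf (hence adjacent to no other leaf), a connected strong defensive alliance of cardinality at least $2$ must contain $v_r$ or $w_t$. This leaves three mutually exclusive cases, and the strong alliance polynomial is the sum of their three contributions.

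\emph{Case $v_r\in S$, $w_t\notin S$.} Connectivity forces $S=\{v_r\}\cup P$ with $P$ a nonempty set of $j$ leaves at $v_r$. Each leaf of $S$ satisfies the defensive-$0$ inequality trivially, and the inequality at $v_r$ reads $j\ge(r-1-j)+1$, i.e.\ $j\ge\lceil r/2\rceil$; there are $\binom{r-1}{j}$ admissible $P$ for each such $j$. Hence this case contributes $\sum_{j\ge\lceil r/2\rceil}\binom{r-1}{j}x^{j+1}$. Comparing this with the closed form of $a(S_r;x)$ from Corollary~\ref{c:Sn}, whose summation starts one step earlier (at $j=\lceil(r-1)/2\rceil$), one sees that the contribution equals $a(S_r;x)$ when $r$ is even and $a(S_r;x)$ minus its lowest-degree term when $r$ is odd; with the convention $\binom{z}{y}=0$ for $y\notin\mathbb{Z}$ this is uniformly $a(S_r;x)-\binom{r-1}{(r-1)/2}x^{(r+1)/2}$. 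The symmetric case $w_t\in S$, $v_r\notin S$ contributes the analogous expression in $t$.

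\emph{Case $v_r,w_t\in S$.} Write $S=\{v_r,w_t\}\cup P\cup Q$, with $P$ (resp.\ $Q$) a set of $j$ (resp.\ $i$) leaves at $v_r$ (resp.\ $w_t$); connectivity is automatic because $v_r\sim w_t$. The leaves are again harmless; the inequality at $v_r$ now reads $j+1\ge r-1-j$, i.e.\ $j\ge\lceil r/2\rceil-1$ (having $w_t$ inside $S$ lowers the threshold by one), and symmetrically $i\ge\lceil t/2\rceil-1$. Since the choices of $P$ and $Q$ are independent, the number of such alliances of a fixed size is a Cauchy product of binomial coefficients, so this case contributes $\bigl(\sum_{j\ge\lceil r/2\rceil-1}\binom{r-1}{j}x^{j+1}\bigr)\bigl(\sum_{i\ge\lceil t/2\rceil-1}\binom{t-1}{i}x^{i+1}\bigr)$. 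Comparing each factor with Corollary~\ref{c:Sn} once more, the first factor equals $a(S_r;x)+\binom{r-1}{r/2-1}x^{r/2}$ and the second $a(S_t;x)+\binom{t-1}{t/2-1}x^{t/2}$, the extra terms being nonzero only in the even case. Adding the three contributions produces the stated identity.

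The heavy part of the argument is routine binomial bookkeeping; the genuinely delicate point is the threshold: a centre admits at least $\lceil r/2\rceil$ of its leaves when the other centre lies outside $S$ but only at least $\lceil r/2\rceil-1$ of them when the other centre lies inside $S$. Consequently the single-centre contributions are ``one term shorter'' than $a(S_r;x)$ while the two-centre factors are ``one term longer'', and the main thing to check is that after these one-step shifts the three pieces recombine, for every parity of $r$ and of $t$ at once, into the asserted combination of $a(S_r;x)$, $a(S_t;x)$ and the two binomial correction terms — the $\binom{z}{y}=0$ convention being exactly what erases the even/odd distinction from the final formula.
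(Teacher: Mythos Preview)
Your argument is exactly the paper's: the same three-case split on which centres lie in $S$, the same threshold analysis at $v_r$ and $w_t$, and the same identification of each contribution with $a(S_r;x)$ (or a one-term shift of it) via Corollary~\ref{c:Sn}. One caveat worth recording: in Case~1 you obtain the correction term $\binom{r-1}{(r-1)/2}x^{(r+1)/2}$, whereas the stated identity and the paper's own proof carry the exponent $(r-1)/2$; a check at $r=t=3$ (where $a(S_{3,3};x)=2x^{3}+4x^{4}+4x^{5}+x^{6}$) shows that your exponent is the correct one, so your computation has silently repaired a typo rather than reproduced the printed formula verbatim.
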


\begin{proof}
Colloquially speaking, we say that $S_{r,t}$ is formed by two stars $S_r$ and $S_t$ (without to include the edge joining its central vertices).
Denote by $v_r$ and $w_t$ the centers of $S_{r,t}$ with $\d(v_r)=r$ and $\d(w_t)=t$ (the center of $S_r$ and $S_t$, respectively).
Let $S\subset V(S_{r,t})$ be a strong defensive alliance in $S_{r,t}$.
Then $\{v_r,w_t\}\cap S \neq \emptyset$, thus, we have either (1) $v_r\in S$ and $w_t\notin S$, (2) $v_r\notin S$ and $w_t\in S$ or (3) $v_r\in S$ and $w_t\in S$.
Clearly, this cases are disjoints and cases (1) and (2) are symmetric cases.

In order to analyze the firsts cases we can assume that $v_r\in S$ and $w_t\notin S$ (case (1)).
Since $\d(v_r)=r$, we have $\d_S(v_r)\geq \lceil\frac{r}2\rceil$, and so, $|S|\ge \lceil\frac{r+2}2\rceil$. Besides, it is a simple matter to every $S^*\subset V(S_{r,t})$ with $v_r\in S$, $w_t\notin S$ and $|S^*|\ge\lceil\frac{r+2}2\rceil$ is a strong defensive alliance in $S_{r,t}$.
In fact, we obtain in $a(S_{r,t};x)$, from case (1), as addend
$$\displaystyle\sum_{i=\lceil\frac{r}2\rceil}^{r-1} {r-1 \choose i} x^{i+1} = a(S_r;x)-{r-1 \choose (r-1)/2} x^{(r-1)/2}.$$

So, by symmetric we have the analogous result by case (2), i.e., these strong defensive alliances provide in $a(S_{r,t};x)$ as addend $$a(S_t;x)-{t-1 \choose (t-1)/2} x^{(t-1)/2}.$$

In order to finish the proof, we now consider case (3), i.e., $v_r\in S$ and $w_t\in S$.
Obviously, $S\subset V(S_{r,t})$ is a strong defensive alliance in $S_{r,t}$ if and only if there are $S_1\subset V(S_r)$ and $S_2\subset V(S_t)$ with $S=S_1\cup S_2$, $|S_1|\ge \lceil \frac{r}{2}\rceil$ and $|S_2|\ge\lceil\frac{t}{2}\rceil$.
Thus, we obtain in $a(S_{r,t};x)$, from case (3), as addend
$$\left( \sum_{i=\lceil \frac{r}{2}\rceil-1}^{r-1} {r-1 \choose i} x^{i+1} \right) \left( \sum_{j=\lceil \frac{t}{2}\rceil-1}^{t-1} {t-1 \choose j} x^{j+1} \right),$$
or what is the same $\left( {r-1 \choose \frac{r}{2}-1} x^{\frac{r}{2}} + a(S_r; x)\right) \left( {t-1 \choose \frac{t}{2}-1} x^{\frac{t}{2}} + a(S_t; x) \right)$.
\end{proof}

A sequence $a_0, a_1, \ldots, a_n,\ldots$ of nonnegative real numbers is called a \emph{logarithmically concave} sequence, or a \emph{log-concave} sequence for short, if $a_i^2 > a_{i-1}a_{i+1}$ holds for $i > 1$.
Furthermore, it follows easily that every log-concave sequence is unimodal.
Menon in 1969 proves that the binomial convolution of two log-concave sequences is a log-concave sequence (see \cite{Me}).
Hence, we have the following consequence of Theorems \ref{t:k_nm} and \ref{t:S_rt}.

\begin{corollary}\label{c:Knm-Srt}
  The strong alliance polynomials of complete bipartite and double star graphs are unimodal (log-concave).
\end{corollary}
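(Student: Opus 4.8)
The plan is to read both polynomials off the product formulas in Theorems~\ref{t:k_nm} and~\ref{t:S_rt}, using two standard facts. First, the sequence of binomial coefficients ${N\choose 0},\dots,{N\choose N}$ is strictly log-concave, so any contiguous sub-block of it is log-concave and, being a block of strictly positive numbers, has no internal zeros. Second, the convolution result recalled just before the corollary: the convolution of two nonnegative log-concave sequences without internal zeros is again log-concave, hence unimodal. I use throughout that a polynomial is log-concave (unimodal) exactly when its coefficient sequence is, and that the Cauchy product of polynomials corresponds to the convolution of their coefficient sequences.

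For $K_{n,m}$ the first step is to rewrite the two factors appearing in Theorem~\ref{t:k_nm}. Since $a(K_n)=\lceil(n+1)/2\rceil$ and ${n\choose n/2}=0$ when $n$ is odd, one has
\[
a(K_n;x)+{n\choose n/2}\,x^{n/2}=\sum_{k=\lceil n/2\rceil}^{n}{n\choose k}x^{k},
\]
whose coefficient sequence ${n\choose\lceil n/2\rceil},\dots,{n\choose n}$ is a contiguous block of the $n$-th row of Pascal's triangle, hence log-concave with no internal zeros by the first fact; the analogous statement holds for the $m$-th factor. By Theorem~\ref{t:k_nm}, $a(K_{n,m};x)$ is the product of these two polynomials, so its coefficient sequence is their convolution, and the second fact gives log-concavity, hence unimodality. (Through Corollary~\ref{c:Sn} this also re-proves the star case already noted in Remark~\ref{r:SnUnim}.)

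For the double star I would work from Theorem~\ref{t:S_rt}, taking $r\le t$ without loss of generality. Using Corollary~\ref{c:Sn} together with Proposition~\ref{p:computos} (or, directly, the fact that an alliance of $S_r$ is the centre together with at least $\lceil(r-1)/2\rceil$ of its $r-1$ leaves), the two factors of the product term are
\[
{r-1\choose r/2-1}\,x^{r/2}+a(S_r;x)=x\sum_{i=\lceil r/2\rceil-1}^{r-1}{r-1\choose i}x^{i},
\]
a shifted contiguous block of the $(r-1)$-st Pascal row; by the first fact its coefficient sequence is log-concave with no internal zeros, and likewise for the $t$-factor, so by the second fact the whole product term is log-concave with no internal zeros. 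The remaining summands in Theorem~\ref{t:S_rt} --- the contributions of the alliances meeting only one of the two centres --- are, by the same reasoning, shifts of contiguous blocks of Pascal rows, hence log-concave as well. Thus $a(S_{r,t};x)$ appears as a sum of three log-concave polynomials, and it remains to show that this particular sum is unimodal.

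That last step is the main obstacle, and I expect it to be the genuinely hard part. A sum of log-concave polynomials is in general neither unimodal nor even supported on an interval, so there is no shortcut: the three pieces above live on the exponent ranges $[\lceil r/2\rceil+1,r]$, $[\lceil t/2\rceil+1,t]$ and $[\lceil r/2\rceil+\lceil t/2\rceil,r+t]$ respectively, and the plan would be first to check that these ranges overlap to form a single interval, then to locate the mode of the dominant product term --- which alone carries the top degree $r+t$ and the largest coefficients --- and finally to verify, by comparing binomial coefficients, that neither one-sided piece overtakes the product term on the overlap, so that adding them preserves the ascent to the mode and the descent beyond it. The decisive quantity is the gap between the top of the $S_r$-contribution at exponent $r$ and the start of the product term at exponent $\lceil r/2\rceil+\lceil t/2\rceil$; I expect that controlling it --- and, if necessary, imposing a hypothesis bounding $|r-t|$ so that it does not open an interior zero --- is where the real work of the proof lies.
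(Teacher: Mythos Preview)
Your treatment of $K_{n,m}$ is exactly the paper's argument: Theorem~\ref{t:k_nm} writes $a(K_{n,m};x)$ as a product of two polynomials whose coefficient sequences are contiguous blocks of a Pascal row, and Menon's preservation of log-concavity under convolution finishes the job. Nothing to add there.

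For the double star you have correctly located a real gap --- and it is a gap the paper itself does not close. The paper's entire ``proof'' of the double-star half of the corollary is the single sentence preceding it, invoking Menon's convolution result and Theorem~\ref{t:S_rt}. But, as you observe, Theorem~\ref{t:S_rt} does \emph{not} express $a(S_{r,t};x)$ as a product: it is a sum of two one-sided contributions (from alliances meeting only $v_r$ or only $w_t$) plus the product term (from alliances containing both centres). Menon's theorem says nothing about sums, and a sum of log-concave sequences need not be log-concave or even unimodal, so the one-line deduction claimed in the paper is not a proof. Your three-piece decomposition and the exponent ranges you list are correct, and your plan --- check that the supports form one interval, show the product term dominates and carries the mode, then verify monotonicity across the overlaps --- is the natural route, but you do not carry it out, and the tentative ``if necessary, impose a hypothesis bounding $|r-t|$'' signals that you have not verified it always works. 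In short: your $K_{n,m}$ argument matches the paper and is complete; your $S_{r,t}$ argument is incomplete in precisely the place where the paper's is, though you at least identify the difficulty honestly while the paper simply asserts the conclusion.
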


The Example \ref{ex:S44} shows that no all strong alliance polynomial are unimodal.

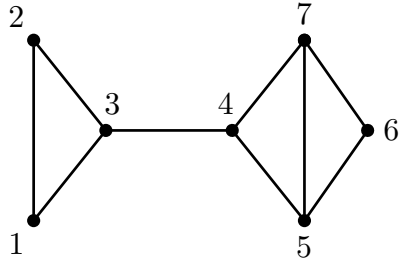
\begin{figure}[h]
\centering
\scalebox{1.2}
{\begin{pspicture}(-1.9,-1.5)(2.6,1.5)
\psline[linewidth=0.03cm,dotsize=0.07055555cm 2.5]{*-*}(-1.5,-1)(-1.5,1)
\psline[linewidth=0.03cm,dotsize=0.07055555cm 2.5]{-}(-1.5,-1)(-0.7,0)(-1.5,1)
\psline[linewidth=0.03cm,dotsize=0.07055555cm 2.5]{*-*}(-0.7,0)(0.7,0)(1.5,1)
\psline[linewidth=0.03cm,dotsize=0.07055555cm 2.5]{*-*}(0.7,0)(1.5,-1)(2.2,0)
\psline[linewidth=0.03cm,dotsize=0.07055555cm 2.5]{*-*}(1.5,-1)(1.5,1)
\psline[linewidth=0.03cm,dotsize=0.07055555cm 2.5]{-}(1.5,1)(2.2,0)
\uput[130](-1.5,1){$2$}
\uput[230](-1.5,-1){$1$}
\uput[75](-0.7,0){$3$}
\uput[105](0.7,0){$4$}
\uput[270](1.5,-1){$5$}
\uput[0](2.2,0){$6$}
\uput[90](1.5,1){$7$}
\end{pspicture}}
\caption{Graph which its alliance polynomial is not unimodal.} \label{fig:NoUnim}
\end{figure}

\begin{example}\label{ex:S44}
 Let us consider $G$ the graph in Figure \ref{fig:NoUnim}. It is easy to compute the coefficients of its strong alliance polynomial.
 \begin{description}
   \item[$a_{2}(G)=1$] {By Theorem \ref{th:prop} (iv).}
   \item[$a_{3}(G)=3$] {Since the strong defensive alliances with cardinality $3$ are $\{1,2,3\}$, $\{4,5,7\}$ and $\{5,6,7\}$.}
   \item[$a_{4}(G)=1$] {Since the strong defensive alliance with cardinality $4$ is $\{4,5,6,7\}$.}
   \item[$a_{5}(G)=4$] {Since the strong defensive alliances with cardinality $5$ are $\{1,3,4,5,6\}$, $\{1,3,4,5,7\}$, $\{2,3,4,5,6\}$ and $\{2,3,4,5,7\}$.}
   \item[$a_{6}(G)=5$] {Since the strong defensive alliances with cardinality $6$ are $\{2,3,4,5,6,7\}$, $\{1,3,4,5,6,7\}$, $\{1,2,3,4,6,7\}$, $\{1,2,3,4,5,7\}$ and $\{1,2,3,4,5,6\}$.}
   \item[$a_{7}(G)=1$] {By Theorem \ref{th:prop} (iii).}
 \end{description}
 Thus, we obtain $a(G;x) = x^2 + 3x^3 + x^4 + 4x^5 + 5x^6 + x^7$. Note that, its coefficients verify $a_{2}(G) < a_{3}(G)$, $a_{3}(G) > a_{4}(G)$ and $a_{4}(G) < a_{5}(G)$.
\end{example}

As usual, we define the graph $G\setminus e$ as the graph with $V(G\setminus e)=V(G)$ and $E(G\setminus e)=E(G) \setminus\{e\}$, where $e \in E(G)$.
Analogously, for $r\ge2$ we define $G\setminus \{e_1,\ldots,e_r\}$ as the graph with $V(G\setminus \{e_1,\ldots,e_r\})=V(G)$ and $E(G\setminus \{e_1,\ldots,e_r\})=E(G) \setminus\{e_1,\ldots,e_r\}$, where $\{e_1,\ldots,e_r\}\subset E(G)$.

\begin{theorem}\label{t:kn-e1e2}
Let $n\ge3$. Then, we have the following statements
\begin{enumerate}
\item  If $n$ is odd, then $ a(K_n \setminus e_1; x)\neq a(K_n; x)$.
\item  If $n$ is even, then $ a(K_n; x) = a(K_n\setminus e_1; x) = ... = \, a(G \setminus \{ e_1,\ldots,e_{\frac{n}{2}-1}\}; x)$, where edges $e_i$ are not common endpoints.
\end{enumerate}
\end{theorem}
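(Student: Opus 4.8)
The plan is to determine, for a matching $\{e_1,\dots,e_r\}\subseteq E(K_n)$ (that is, edges pairwise sharing no endpoint), exactly which sets $S$ are strong defensive alliances of $G_r:=K_n\setminus\{e_1,\dots,e_r\}$, and then to compare coefficients with $a(K_n;x)=\sum_{k=\lceil(n+1)/2\rceil}^{n}\binom{n}{k}x^k$ from Proposition~\ref{p:computos}. Call a vertex \emph{matched} if it is an endpoint of some $e_i$ and \emph{free} otherwise, and for a matched vertex $v$ write $v'$ for its partner. If $|S|=k$ and $v\in S$, then $v$ has degree $n-1$ in $G_r$ when free and $n-2$ when matched, while $\d_S(v)=k-1$ unless $v$ is matched with $v'\in S$, in which case $\d_S(v)=k-2$. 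Since \eqref{cond-A-Defensiva} with alliance parameter $0$ is just $2\d_S(v)\ge\d(v)$, a set $S$ with $\langle S\rangle$ connected is a strong defensive alliance of $G_r$ if and only if, for every $v\in S$: $k\ge\lceil(n+1)/2\rceil$ (if $v$ is free), or $k\ge\lceil n/2\rceil$ (if $v$ is matched and $v'\notin S$), or $k\ge\lceil n/2\rceil+1$ (if $v$ is matched and $v'\in S$).

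For item~(1), with $n$ odd and $e_1=uv$, it suffices to look at the single cardinality $k=(n+1)/2=\lceil n/2\rceil$. Every $k$-set avoiding $\{u,v\}$, and every $k$-set containing exactly one of $u,v$, satisfies all three constraints above and induces a clique, hence is a strong defensive alliance; these contribute $\binom{n-2}{k}+2\binom{n-2}{k-1}$. But a $k$-set containing both $u$ and $v$ requires $k\ge\lceil n/2\rceil+1=(n+3)/2$ and so is not an alliance. Therefore $a_{(n+1)/2}(K_n\setminus e_1)=\binom{n-2}{k}+2\binom{n-2}{k-1}=\binom{n}{k}-\binom{n-2}{k-2}$ by applying Pascal's rule twice, and $\binom{n-2}{k-2}=\binom{n-2}{(n-3)/2}\ge1$ for $n\ge3$. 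Hence $a(K_n\setminus e_1;x)\ne a(K_n;x)$.

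For item~(2), with $n$ even, I would compare $a_k(G_r)$ with $a_k(K_n)=\binom{n}{k}$ (which is $0$ for $k\le n/2$) cardinality by cardinality, now with $r\le n/2-1$. If $k\ge n/2+1$, all three constraints above hold automatically, and $\langle S\rangle$ is $K_k$ with a submatching deleted, which is connected for $k\ge3$; so every $k$-set is a strong defensive alliance and $a_k(G_r)=\binom{n}{k}$. If $k\le n/2-1$, no vertex of any $S$ can meet even the weakest constraint $k\ge n/2$, so $a_k(G_r)=0$. The only critical value is $k=n/2$: there $S$ can be an alliance only if each of its vertices is matched with its partner outside $S$, i.e.\ $S$ is a set of $n/2$ matched vertices, no two of which are the two endpoints of a single deleted edge $e_i$; since the $e_i$ provide only $r\le n/2-1$ disjoint pairs, no such $S$ exists, so $a_{n/2}(G_r)=0$. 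Hence $a(G_r;x)=a(K_n;x)$, and running this with $r=1,2,\dots,n/2-1$ yields the asserted chain of equalities.

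The crux is the count at $k=n/2$ in item~(2): one must observe that the only strong defensive alliances that could be created by deleting edges have cardinality exactly $n/2$ and consist of matched vertices no two of which are $e_i$-partners, and then eliminate them using $r\le n/2-1$. The remaining ingredients—the case analysis of $2\d_S(v)\ge\d(v)$ above and the fact that $K_k$ minus a matching is connected for $k\ge3$—are routine.
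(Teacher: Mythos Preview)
Your proof is correct and follows essentially the same strategy as the paper: compare $a_k(G_r)$ with $a_k(K_n)$ at the critical cardinality ($k=(n+1)/2$ for $n$ odd, $k=n/2$ for $n$ even) and verify agreement elsewhere. Your treatment is in fact a bit more explicit---you set up the three-case vertex classification once and reuse it, compute the exact defect $\binom{n-2}{(n-3)/2}$ in part~(1) rather than merely exhibiting one failing set, and you check connectedness of $\langle S\rangle$ in part~(2), which the paper leaves implicit---but the argument is the same in substance.
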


Note that Theorem \ref{t:kn-e1e2} (2) provides different graphs which have same strong alliance polynomials.

\begin{proof}
${}$
\begin{enumerate}
\item Let $p$ be the natural number such that $n=2p+1$. By Proposition \ref{p:computos} (3), we have $a(K_n ) = p+1$ and every $S\subset V(K_n)$ with $|S|\ge p+1$ is strong defensive alliance in $K_n$; in particular, $a_{p+1}(K_n)={n\choose p+1}$.
    Denote $e_1:=[u,v]$.
    However, we choice $S\subset V(K\setminus e_1)$ with $|S|=p+1$ and $\{u,v\}\subset S$, but $S$ is no strong defensive alliance since $\d _S(v) = p-1 < p =$ $\d _{\overline{S}}(v)$. Thus, $a_{p+1}(K_n\setminus e_1)< {n\choose p+1}$ and so, $ a(K_n\setminus e_1; x)\neq a(K_n; x)$.

\item Let $p$ be the natural number with $n=2p$. Consider $G_r$ a graph obtained from $K_n$ deleting $1\le r\le p$ edges without common vertex.
    Our next claim is that any subset of $V(G_r)$ with cardinality greater than $p+1$ is a strong defensive alliance in $G_r$.
    Let us consider $S\subset V(G_r)$ with $|S|\ge p+1$. Then, we have $\d_S(v) \ge |S|-2 \ge p-1$ and $\d _{\overline{S}}(v) \le 2p-|S| \le p-1$ for every $v \in S$; in fact, $S$ is a strong defensive alliance in $G_r$.
    So, we have proved the claim and obtain $a(K_n;x)$ as addend in $a(G_r;x)$.

    We next claim is that if $r<p$ then $a(G_r)=p+1$.
    %there is no strong defensive alliance in $G_r$ with cardinality at most $p$.
    Seeking for a contradiction, assume that there is a strong defensive alliance $S\subset V(G_r)$ with $|S|\leq p$.
    Note that if $|S|<p$, then $\d_S(v)\le |S|-1 < p-1$ and $\d_{\overline{S}}(v) = \d(v)-\d_{S}(v) \ge 2p-2 - |S|+1 > p-1$ for every $v\in S$; in fact, $S$ is no a strong defensive alliance in $G_r$.
    Therefore, it suffices suppose that $|S|=p$.
    Hence, if $|S|=p$, then $\d_S(v)\le p-1$ and $\d_{\overline{S}}(v) = n-2 - \d_S(v) \ge p-1$ for every $v\in S$. Thus, $\d_S(v) = p-1=\d_{\overline{S}}(v)$ and $\d(v)=n-2$ for every $v\in S$. So, $r=p$. But, this is the contradiction we were looking for. Then, we obtain
    $$ a(K_n; x) = a(K_n\setminus\{e_1\};x) = ... = \, a(K_n \setminus \{e_1,\ldots,e_{p-1}\};x).$$
\end{enumerate}
\end{proof}

Note that if $\{ e_1,\ldots,e_{\frac{n}2}\}$ is a subset of $E(K_n)$ without common endpoints, then $a(K_n \setminus \{ e_1,\ldots,e_{\frac{n}2}\})=n/2$ since we may choose a strong defensive alliance $S$ in $K_n \setminus \{ e_1,\ldots,e_{\frac{n}2}\}$ with $|S|=p$ when $\{e_1,\ldots,e_{\frac{n}2}\}$ is a set of edges which have one endpoint in $S$ and the other endpoint in $\overline{S}$.
Besides, any $S\subset V(K_n \setminus \{ e_1,\ldots,e_{\frac{n}2}\})$ with cardinality at most $\frac{n}2-1$ is no strong defensive alliance.

\begin{remark}\label{r:Kn-ne}
If $n$ is even, then $ a(K_n; x) \neq a(K_n \setminus \{ e_1,\ldots,e_{\frac{n}2}\}; x)$, where $\{e_1,\ldots,e_{\frac{n}2}\} \subset E(K_n)$ without common endpoints.
For instance, $a(K_4;x)\neq a(C_4;x)$.
\end{remark}

\section*{Acknowledgements}
%We would like to thank the referee for a careful reading of the manuscript and for some helpful suggestions.
%In special for providing two non-isomorphic graphs with the same polynomial alliance.
This work was partly supported by a grant for Mobility of own research program at the University Carlos III de Madrid and a grant from CONACYT (CONACYT-UAG I0110/62/10), M\'exico.

\

\end{document}